\makeatletter \@addtoreset{equation}{section}
\newtheorem{theorem}{Theorem}[section]
\newtheorem{corollary}[theorem]{Corollary}
\newtheorem{definition}[theorem]{Definition}
\newtheorem{lemma}[theorem]{Lemma}
\newtheorem{proposition}[theorem]{Proposition}
\newtheorem{example}[theorem]{Example}
\newtheorem*{theorem*}{Theorem}
\newtheorem*{proposition*}{Proposition}﻿
\def\pr1{\prod\hskip -2.07ex * \hskip 0.9 ex}
\begin{document}
\noindent
\title{A note on irreducible slice algebraic sets}
\author{Anna Gori $^1$} 
\thanks{$^1$ Dipartimento di Matematica - Universit\`a di Milano,                
	Via Saldini 50, 20133  Milano, Italy}
\author{Giulia Sarfatti $^2$}
\thanks{ $^2$ DIISM - Universit\`a Politecnica delle Marche,               
	Via Brecce Bianche 12,  60131, Ancona, Italy}
\author{Fabio Vlacci $^3$}
\thanks{$^3$ MIGe  - Universit\`a di Trieste, via Valerio 12/1, 34127, Trieste, Italy}
\begin{abstract}
	{
		In this short note we prove that if $I$ is a right radical and quasi prime ideal in the ring of quaternionic slice regular polynomials, then the symmetrization $\mathbb S_{\mathcal V_c(I)}$ is an irreducible algebraic set, where $\mathcal V_c(I)$ is the set of common zeros with commuting components of polynomials in $I$. 
Combining this fact with the results proved in our previous paper \cite{irred}, we obtain that for $I$ radical, $\mathcal V_c(I)$ is irreducible if and only if $I$ is quasi prime.
}
	\end{abstract}﻿

\keywords{Quaternionic slice regular polynomials, zeros of quaternionic polynomials, irreducible algebraic sets\\
	{\bf MSC:} 30G35, 16S36} 
\maketitle

\section*{Acknowledgments}
The authors are partially supported by:
GNSAGA-INdAM via the project ``Hypercomplex function theory and
applications''; the first author is also partially supported by MUR
project PRIN 2022 ``Real and Complex Manifolds: Geometry and
Holomorphic Dynamics'', the second and the third authors are also
partially supported by MUR projects PRIN 2022 ``Interactions between
Geometric Structures and Function Theories''.

\section{Introduction}

In the recent paper \cite{irred}, we have investigated the relations between algebraic properties of a right ideal $I$ of slice regular polynomials with the geometric notion of irreducibility for the corresponding slice algebraic set $\mathcal V_c(I)$. 
The aim of this note is to improve this description, providing a complete characterization of irreducible slice algebraic sets associated with radical ideals. 
More precisely, in \cite[Theorem 5.17]{irred}, we proved that for a radical and {\em quasi prime} right ideal $I$, if the symmetrization $\mathbb S_{\mathcal V_c(I)}$ is irreducible as a slice algebraic set, then also $\mathcal V_c(I)$ is irreducible. 		
We here prove that the assumption that $\mathbb S_{\mathcal V_c(I)}$ is irreducible turns out to be redundant. In fact, in Lemma \ref{SVirred}, we show that if $I$ is quasi prime and radical then necessarily $\mathbb S_{\mathcal V_c(I)}$ is irreducible. 
This result, combined with Theorems 5.11 and 5.17 in \cite{irred}, implies that for any radical right ideal $I$, the corresponding slice algebraic set $\mathcal V_c(I)$ is irreducible if and only if $I$ is quasi prime.

\section{Preliminaries}
\noindent Let $\mathbb{H}=\mathbb{R}+i\mathbb{R}+j\mathbb{R}+k\mathbb{R}$ be the four-dimensional algebra of the
quaternions and let $\mathbb{S}=\{J \in \mathbb{H} \ : \ J^2=-1\}$ denote
the two dimensional sphere of imaginary units in $\mathbb H$.  Then we can slice $\mathbb H$ as
\[ \mathbb{H}=\bigcup_{J\in \mathbb{S}}(\mathbb{R}+\mathbb{R} J),  
\]
where $\mathbb{C}_J:=\mathbb{R}+\mathbb{R} J$ is isomorphic to the complex plane $\mathbb{C}$, for any $J\in \mathbb{S}$. 
{Hence, any $q\in \mathbb{H}$ can be expressed as $q=x+yJ$ where $x,y \in \mathbb{R}$ and $J \in
	\mathbb{S}$. The {\it real part }of $q$ is ${\rm Re}(q)=x$ and its
	{\it imaginary part} is ${\rm Im}(q)=yJ$; the {\it conjugate}
	of $q$ is $\bar q:={\rm Re}(q)-{\rm Im}(q)$.  For any non-real
	quaternion $a\in \mathbb{H}\setminus \mathbb{R}$ we will
	denote by $J_a:=\frac{{\rm Im}(a)}{|{\rm Im}(a)|}\in
	\mathbb{S}$ and by $\mathbb{S}_a:=\{{\rm Re}(a)+J |{\rm
		Im}(a)| \ : \ J\in \mathbb{S}\}$. If $a\in \mathbb{R}$,
	then $J_a$ can be any imaginary unit.}
Similarly, if $\textbf{\em a}=(a_1,\ldots,a_n)\in \mathbb H^n$, then the
{\em spherical set} \[\mathbb S_{(a_1,\ldots,a_n)}=\mathbb S_{\textbf{\em a}}
:=\{(g^{-1}a_1g, \ldots, g^{-1}a_ng) : g \in \mathbb{H}\setminus\{0\} \}\]
consists of the points obtained by simultaneously rotating each coordinate of the point 
$\textbf{\em a}=(a_1,\ldots,a_n)$. 
If there exists $J\in \mathbb S$ such that $\textbf{\em a}=(a_1,\ldots,a_n) \in \mathbb C_J^n$, then $\mathbb S_{\textbf{\em a}}$ is called {\em arranged spherical set}.
﻿
Slice regular  polynomials in $n$ quaternionic variables are polynomial functions
$P:{\mathbb{H}^n}  \to \mathbb{H}$,
\[ P(q_1,\ldots, q_n)=
\sum_{ \substack{\ell_1=0,\ldots, L_1  \\
		\cdots\\ \ell_n=0,\ldots, L_n} }{q_1}^{\ell_1}\cdots {q_n}^{\ell_n}a_{\ell_1,\ldots,\ell_n} \]
with $a_{\ell_1,\ldots,\ell_n}\in\mathbb{H}$. ﻿

\noindent The set of slice regular polynomials can be endowed with an
appropriate notion of (non-commutative) product, the so called {\em slice product}, that
will be denoted by the symbol $*$.
Let us recall how it works for monomials, and extend it by linearity to polynomials. 
\begin{definition}
	If $M(q_1,\ldots,q_n)=q_1^{m_1}\cdots q_n^{m_n}a$ and $L(q_1,\ldots,q_n)=q_1^{l_1}\cdots q_n^{l_n}b$ belong to $\mathbb H[q_1,\ldots,q_n]$, then
	$$
	M*L (q_1,\ldots,q_n)=q_1^{m_1+l_1}\cdots q_n^{m_n+l_n}ab.
	$$
\end{definition}
\noindent In this way, $(\mathbb{H}[q_1,\ldots,q_n], +,*)$ is a non-commutative ring.

\noindent For points with commuting components, the $*$-product {has the following explicit expression.}
\begin{proposition}[{\cite[Proposition 3.1]{Nul2}}]\label{prodstar} 
	Let ${  \textbf{a}}=(a_1,\ldots,a_n)\in \mathbb H^n$ be
	such that 
	$a_la_m=a_ma_l$ for any $1\leq l,m\leq n$
	and let $P,Q \in \mathbb H[q_1,\ldots,q_n]$. Then
	\[P*Q(\textbf{a})=\left\{\begin{array}{lr}
		0 & \text {if}\ P(\textbf{a})=0\\
		P(\textbf{a})\cdot Q(P(\textbf{a})^{-1}a_1P(\textbf{a}), P(\textbf{a})^{-1}a_2P(\textbf{a}),\ldots,P(\textbf{a})^{-1}a_nP(\textbf{a})) & \text{if}\ P(\textbf{a})\neq0
	\end{array}\right.\] 	
\end{proposition}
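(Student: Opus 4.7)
My plan is to exploit the bilinearity of $*$ and reduce the identity to a purely algebraic manipulation that relies on the commutativity of the components of $\textbf{a}$. Writing $P=\sum_\alpha q_1^{\alpha_1}\cdots q_n^{\alpha_n}c_\alpha$ and $Q=\sum_\beta q_1^{\beta_1}\cdots q_n^{\beta_n}d_\beta$ as finite sums of monomials, the definition of $*$ on monomials together with bilinear expansion gives
\[
(P*Q)(\textbf{a})=\sum_{\alpha,\beta}a_1^{\alpha_1+\beta_1}\cdots a_n^{\alpha_n+\beta_n}\,c_\alpha d_\beta.
\]
For compactness I set $A_\alpha:=a_1^{\alpha_1}\cdots a_n^{\alpha_n}$ and $B_\beta:=a_1^{\beta_1}\cdots a_n^{\beta_n}$, noting that $P(\textbf{a})=\sum_\alpha A_\alpha c_\alpha$.

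Using the hypothesis $a_la_m=a_ma_l$, I would split $a_i^{\alpha_i+\beta_i}=a_i^{\alpha_i}a_i^{\beta_i}$ and freely reorder the $a_i$ factors to obtain $a_1^{\alpha_1+\beta_1}\cdots a_n^{\alpha_n+\beta_n}=A_\alpha B_\beta$; the same commutativity yields $A_\alpha B_\beta=B_\beta A_\alpha$, although $B_\beta$ need not commute with the quaternionic coefficient $c_\alpha$. Rewriting $A_\alpha B_\beta c_\alpha d_\beta=B_\beta(A_\alpha c_\alpha)d_\beta$ and factoring the inner sum over $\alpha$, I get
\[
(P*Q)(\textbf{a})=\sum_\beta B_\beta\Bigl(\sum_\alpha A_\alpha c_\alpha\Bigr)d_\beta=\sum_\beta B_\beta\,P(\textbf{a})\,d_\beta.
\]
This already disposes of the first case: if $P(\textbf{a})=0$, then $(P*Q)(\textbf{a})=0$.

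For the second case, let $c:=P(\textbf{a})\neq 0$. Inserting $cc^{-1}$ and pulling $c$ out on the left gives
\[
\sum_\beta B_\beta\,c\,d_\beta=c\sum_\beta(c^{-1}B_\beta c)\,d_\beta.
\]
A telescoping computation then shows $c^{-1}B_\beta c=c^{-1}a_1^{\beta_1}c\cdot c^{-1}a_2^{\beta_2}c\cdots c^{-1}a_n^{\beta_n}c=(c^{-1}a_1c)^{\beta_1}\cdots(c^{-1}a_nc)^{\beta_n}$, so the right-hand side above is exactly $c\cdot Q(c^{-1}a_1c,\ldots,c^{-1}a_nc)$, which is the claimed identity. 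The only delicate step is the commutation $A_\alpha B_\beta=B_\beta A_\alpha$, which allows $P(\textbf{a})$ to be pulled out as a common left factor without disturbing the coefficients $c_\alpha$; beyond this, everything is elementary rearrangement, and I do not foresee any substantive obstacle.
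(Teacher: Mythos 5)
Your argument is correct, and it follows the standard computational route: expand $P*Q$ monomial by monomial, use the pairwise commutativity of the $a_i$ to factor $a_1^{\alpha_1+\beta_1}\cdots a_n^{\alpha_n+\beta_n}=B_\beta A_\alpha$ so that the sum collapses to $\sum_\beta B_\beta\,P(\textbf{a})\,d_\beta$, and then (when $P(\textbf{a})\neq 0$) insert $cc^{-1}$ and telescope to recognize $Q$ evaluated at the conjugated tuple. This is the direct multivariable generalization of the familiar one-variable identity $(f*g)(q)=\sum_n q^n f(q)b_n = f(q)\,g\bigl(f(q)^{-1}qf(q)\bigr)$, and all the delicate points you flag (that $B_\beta$ commutes with $A_\alpha$ but not with the quaternionic coefficients $c_\alpha$, $d_\beta$) are handled correctly. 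Note that the present paper does not prove this proposition but imports it from \cite{Nul2}, so there is no in-paper proof to compare against; your derivation is a complete and correct one matching the expected argument.
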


﻿

\noindent As in the one-variable case, it is possible to introduce two linear
operators on slice regular polynomials. For the sake of
simplicity, we define them only for monomials and extend it to polynomials by linearity.
\begin{definition}\label{R-coniugata2}
	Let $M(q_1,\ldots,q_n)=q_1^{m_1}\cdots q_n^{m_n}a\in \mathbb H[q_1,\ldots,q_n]$. Then the \emph{regular conjugate} of $M$ is 
	\[M^c(q_1,\ldots,q_n)=q_1^{m_1}\cdots q_n^{m_n}\overline{a},\]
	and the
	\emph{symmetrization} of $M$ is  
	\[M^s= M*M^c = M^c*M.\]
\end{definition}

\noindent {In what follows, we will mainly consider {\em right ideals} of $\mathbb{H}[q_1,\ldots,q_n]$. In particular, we will use the notation 
	$\langle P : \ P \in \mathcal{E}\rangle $
	for the {\em right} ideal generated in $(\mathbb H[q_1,\ldots, q_n], +, *)$ by the slice regular polynomials belonging to the subset $ \mathcal E \subset \mathbb H[q_1,\ldots, q_n]$.}

To any subset of $\mathbb H^n$ we can associate a right ideal as follows.
\begin{definition}\label{idealinuovi}
	Let $Z$ be any subset of $\mathbb{H}^n$.
	\noindent We denote by
	$\mathcal{J}(Z)$  the right ideal generated in $\mathbb{H}[q_1,\ldots,q_n]$
	by slice regular polynomials which vanish on $Z$: 
		a polynomial $P\in \mathbb{H}[q_1,\ldots,q_n]$ belongs to  $\mathcal{J}(Z)$  if there exist a positive integer $N$ and polynomials $P_1,\ldots, P_N, Q_1\ldots, Q_N\in \mathbb{H}[q_1,\ldots,q_n]$  such that $P_k$ vanishes on $Z$ for any $k=1,\ldots, N$ and $P=\sum_{k=1}^N P_k*Q_k$, i.e. {
			
			{\small	\[\mathcal{J}(Z)\!:=\!\left\{\sum_{k=1}^N P_k*Q_k : N \in \mathbb N \setminus\{0\}, P_k,Q_k\in\mathbb{H}[q_1,\ldots,q_n]\ \text{with}\ {P_k}_{|_Z}= 0 \ \text{for $k=1,\ldots, N$}\right\}. \]} }
\end{definition}
\noindent In general 
$\mathcal{J}(Z)$ does not coincide with the set of polynomials vanishing on $Z$, but if $Z_c$ is any subset of $\mathbb H^n$ of points with commuting components, then it can be proved directly, using Proposition \ref{prodstar}, that
	\begin{equation}\label{ZcinVc}
		\mathcal J(Z_c)=\{P\in \mathbb H[q_1,\ldots,q_n] : \  P_{|_{Z_c}} = 0 \}.
\end{equation}
\noindent Furthermore, notice that if $Z'\subseteq Z$ then $\mathcal J(Z')\supseteq \mathcal J (Z)$.


\noindent For our purposes we need to consider the following two generalizations of prime ideals in the non-commutative setting.
The first one was introduced by Reyes in \cite{reyes}.
\begin{definition}\label{cprime} A right ideal $I$  in $\mathbb{H}[q_1,\ldots,q_n]$  is said to be  {\em completely prime} if and only if given two polynomials $P,Q$ such that 
	$P*Q\in I$ 
	and  $P*I\subseteq I$, 
	then  either $P\in I$ or $Q\in I$.
\end{definition}
\noindent We gave in \cite{irred} the second definition as follows. 
\begin{definition}\label{qpideal} A right ideal $I\subseteq  \mathbb H[q_1,\ldots,q_n]$ is said to be  {\em quasi prime} if and only if given two polynomials $P,Q$ such that 
	$P*Q\in I$, 
	then  either $P\in I$ or $Q^s\in I$.
\end{definition}
﻿\noindent The class of quasi prime ideals contains the one of completely prime ideals (see Proposition 5.10 in \cite{irred}).

\noindent Moreover, the notion of completely prime ideals is used to define the radical of an ideal.﻿
﻿
\begin{definition}
	Let $I$ be a right ideal of $\mathbb{H}[q_1,\ldots,q_n]$. Then the (right) radical of $I$ is defined as
	\[\sqrt I:= \bigcap_{\substack{ I \subset J\\ \; {J\text{ completely prime right ideal}}}}J.\]
	An ideal $I$  in $\mathbb{H}[q_1,\ldots,q_n]$  is said to be  {\em radical} if and only if $\sqrt I=I$.
\end{definition}

\noindent In \cite{aryapoor}, the Author gives an explicit characterization of the radical of an ideal in the non-commutative framework, which in our setting can be restated in this way.

\begin{proposition}[{\cite[Proposition 2.5]{aryapoor}}]\label{arya}
Let $I$ be a proper right ideal of $\mathbb H[q_1,\ldots,q_n]$. The (right) radical of $I$ coincides with the set of all polynomials $P\in \mathbb H[q_1,\ldots,q_n]$ such that for every $a\in \mathbb H$, there exists a natural number $N \ge 1$ satisfying the condition 
\[(Pa)^{*N}\in I + (Pa)*I+(Pa)^{*{2}}*I+\cdots+(Pa)^{*N}*I.\]
\end{proposition}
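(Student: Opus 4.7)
The plan is to establish both inclusions in the claimed equality. Let $\mathrm{Rad}(I)$ denote the set of polynomials $P \in \mathbb H[q_1,\ldots,q_n]$ satisfying the algebraic condition in the statement.

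For the inclusion $\mathrm{Rad}(I) \subseteq \sqrt{I}$, I fix such a $P$ and an arbitrary completely prime right ideal $J \supseteq I$, aiming to deduce $P \in J$. Specialising the hypothesis to $a = 1$ gives a relation
\[
P^{*N} \;=\; r_0 + P * r_1 + \cdots + P^{*N} * r_N
\]
with $r_i \in I \subseteq J$, which rearranges to $P^{*N}*(1 - r_N) \in J + P*J + \cdots + P^{*(N-1)}*J$. Since $J$ is proper, $1 - r_N \notin J$ (otherwise $1 \in J$), so the completely prime condition of Definition \ref{cprime}, applied after verifying the side assumption $P^{*k}*J \subseteq J$ by an induction that is itself driven by the peeling argument, removes one factor of $P$ at a time and eventually forces $P \in J$.

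For the reverse inclusion $\sqrt{I} \subseteq \mathrm{Rad}(I)$, I argue by contrapositive. If $P \notin \mathrm{Rad}(I)$, some $a_0 \in \mathbb H$ witnesses the failure of the condition. I apply Zorn's lemma to the family of right ideals $K \supseteq I$ for which no power $(Pa_0)^{*N}$ lies in $K + (Pa_0)*K + \cdots + (Pa_0)^{*N}*K$; this family is non-empty (it contains $I$) and closed under unions of chains because the forbidden expressions are finite. A maximal element $J$ of this family must be completely prime: suppose $P_1 * P_2 \in J$ with $P_1 * J \subseteq J$ and $P_1, P_2 \notin J$. Adjoining $P_1$ or $P_2$ to $J$ produces strictly larger right ideals, each outside the family and hence each containing a forbidden combination featuring some power $(Pa_0)^{*N_i}$. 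Multiplying these two representations via the slice product, and using $P_1 * J \subseteq J$ together with $P_1 * P_2 \in J$ to absorb the cross terms, yields a forbidden combination for $J$ itself, contradicting maximality. Since $Pa_0 \notin J$ by construction, in particular $P \notin J$, and $J$ is a completely prime right ideal containing $I$, so $P \notin \sqrt{I}$.

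The main obstacle is the combinatorial manipulation in the Zorn's-lemma step: in the non-commutative ring $(\mathbb H[q_1,\ldots,q_n],+,*)$, combining two relations over only right ideals forces careful tracking of left $*$-multiplications, and the side condition $P_1 * J \subseteq J$ in Definition \ref{cprime} is precisely what lets the combination close. This same subtlety is what motivates the auxiliary parameter $a \in \mathbb H$ in the characterization: by choosing a witness $a_0$ rather than working with $P$ alone, one gains enough flexibility to keep the maximal ideal constructed by Zorn's lemma proper while still separating $P$ from it.
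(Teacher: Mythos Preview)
The paper does not supply a proof of this proposition: it is quoted from \cite{aryapoor} (Proposition~2.5 there) and used only as a black box, e.g.\ in Example~\ref{ex1}. So there is no in-paper argument to compare your proposal against.

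On its own merits, your sketch follows a natural architecture but leaves genuine gaps in both directions. For $\mathrm{Rad}(I)\subseteq\sqrt I$, applying Definition~\ref{cprime} to peel a factor of $P$ requires the side hypothesis $P*J\subseteq J$, which is not automatic for a right ideal $J$; your remark that this is verified ``by an induction that is itself driven by the peeling argument'' is circular, since you cannot invoke the completely prime condition until that side hypothesis is already in hand, and the single relation obtained from $a=1$ does not by itself supply it. For the Zorn's-lemma direction, the phrase ``multiplying these two representations via the slice product \ldots\ to absorb the cross terms'' does not cash out: the two forbidden expressions live over the distinct enlargements $J+P_1*\mathbb H[q_1,\dots,q_n]$ and $J+P_2*\mathbb H[q_1,\dots,q_n]$, and collapsing their product to a forbidden expression over $J$ alone would require moving $P_1$ (or $P_2$) past powers of $Pa_0$, which fails in the non-commutative ring $(\mathbb H[q_1,\dots,q_n],+,*)$. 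Both points are exactly where non-commutativity bites, so a self-contained argument needs substantially more than what you have written; you should consult \cite{aryapoor} directly for the details.
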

\noindent To any right ideal we associate two sets.
\begin{definition}
	\noindent Let $I$ be a right ideal in $\mathbb{H}[q_1,\ldots,q_n]$.
Then $\mathcal{V}(I)$ is the set of common zeros of $P\in I$, i.e.,
	\[ \mathcal{V}(I):=\bigcap_{P\in I} Z_P, \quad \text{ where $Z_P\subset \mathbb{H}^n$ denotes the zero set of $P\in I$ 
	}.\]
		\noindent Furthermore, we define $\mathcal{V}_c(I):=\mathcal{V}(I)\cap\bigcup_{K\in \mathbb{S}}\mathbb{C}_K^n.$
	\end{definition}

\noindent In analogy to the classical theory of algebraic geometry, it is possible to introduce a notion of algebraic set in $\mathbb H^n$, see \cite{Nul2}.
﻿

\begin{definition} \label{slicealg}
	A subset $V\subseteq \mathbb H^n$ is called {\em slice algebraic} if for any $K\in \mathbb S$, $V\cap \mathbb C_K^n$ is a complex algebraic subset of $\mathbb C_K^n$.
\end{definition}

\noindent As proven in \cite{Nul2}, all sets of the form $\mathcal V(I)$ and $\mathcal V_c(I)$ are slice algebraic.

 \begin{definition} 
	Starting from a set $E_c\subseteq \mathbb H^n$ containing only points with commuting components, we denote its {\em symmetrization} by $$\mathbb{S}_{E_c}:=\bigcup_{\textbf{a} \in E_c}\mathbb S_{\textbf a}.$$\end{definition}

	﻿
	\noindent {Note that, if all the points of $E_c$ have commuting components, then the same holds true for all the points of $\mathbb{S}_{E_c}$.﻿}
	
	\begin{definition} 
		Given a right ideal $I\subseteq  \mathbb H[q_1,\ldots,q_n]$, we can define the {\em symmetrized ideal} $\mathcal{S}(I)$ as the ideal generated by $P^s$ with $ P\in I$, i.e. 
		$\mathcal{S}(I)=\langle P^s\;|\;P\in I\rangle$.
	\end{definition}
	\noindent Notice that $ \mathcal{S}(I)$ is contained in $I$. 
The symmetrization of a $\mathcal V_c(I)$ and the symmetrized ideal $\mathcal S(I)$ are related by the following result (see \cite[Theorem 3.6]{irred}))
	\begin{theorem}\label{simmalg}
		Let $I$ be a right ideal of $\mathbb H[q_1,\ldots,q_n]$.  Then 
		\begin{equation}\label{inclusion}
			\mathbb{S}_{\mathcal{V}_c(I)}=	\mathcal{V}_c( \mathcal{S}(I)).
		\end{equation}
	\end{theorem}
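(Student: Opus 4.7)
My plan is to establish the set-theoretic equality by proving the two inclusions separately, leveraging two structural properties of the symmetrization: first, $P^s$ has real coefficients, which follows from $(P^s)^c=(P*P^c)^c=P*P^c=P^s$ together with the fact that regular conjugation reverses the order in $*$-products; and second, real-coefficient polynomials are equivariant with respect to simultaneous conjugation on commuting points, so $P^s(g^{-1}\textbf{a}g)=g^{-1}P^s(\textbf{a})g$.

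For the inclusion $\mathbb{S}_{\mathcal V_c(I)}\subseteq\mathcal V_c(\mathcal S(I))$, I would start from $\textbf{a}\in\mathcal V_c(I)$ and $\textbf{b}=(g^{-1}a_1g,\ldots,g^{-1}a_ng)\in\mathbb{S}_{\textbf{a}}$. First note that conjugation by $g$ preserves commutativity, so $\textbf{b}$ also has commuting components. For every $P\in I$, Proposition \ref{prodstar} applied to $P*P^c$ at $\textbf{a}$, where $P(\textbf{a})=0$, gives $P^s(\textbf{a})=0$; the equivariance above then yields $P^s(\textbf{b})=g^{-1}P^s(\textbf{a})g=0$. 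A generic element $Q\in\mathcal S(I)=\langle P^s\mid P\in I\rangle$ has the form $\sum_i P_i^s*R_i$, and a further application of Proposition \ref{prodstar} (each $P_i^s$ already vanishing at $\textbf{b}$) gives $Q(\textbf{b})=0$, so $\textbf{b}\in\mathcal V_c(\mathcal S(I))$.

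For the reverse inclusion $\mathcal V_c(\mathcal S(I))\subseteq\mathbb{S}_{\mathcal V_c(I)}$, I would fix $\textbf{b}$ with commuting components such that $P^s(\textbf{b})=0$ for every $P\in I$, and must produce a single $\textbf{a}\in\mathbb{S}_{\textbf{b}}$ at which every $P\in I$ vanishes simultaneously. Proposition \ref{prodstar} applied to $P*P^c$ at $\textbf{b}$ yields, for each individual $P\in I$, either $P(\textbf{b})=0$ or $P^c$ vanishes at the rotated point $P(\textbf{b})^{-1}\textbf{b}\,P(\textbf{b})\in\mathbb{S}_{\textbf{b}}$; in either case each $P$ has at least one zero on the orbit $\mathbb{S}_{\textbf{b}}$, but \emph{a priori} not the same one.

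The main obstacle is thus to promote ``each $P\in I$ has a zero in $\mathbb{S}_{\textbf{b}}$'' to ``there exists a common zero''. My plan is to exploit the $*$-ideal structure of $I$ together with the rigidity of slice regular zero sets on a spherical orbit: intersected with one orbit, such a zero set is either empty, a single point, or the whole orbit. If every $P\in I$ vanishes identically on $\mathbb{S}_{\textbf{b}}$, any choice of $\textbf{a}\in\mathbb{S}_{\textbf{b}}$ works. Otherwise, some $P_0\in I$ has an isolated zero $\textbf{a}_0\in\mathbb{S}_{\textbf{b}}$, and for any $Q\in I$ I would test $P_0*Q\in I$ at $\textbf{a}_0$: since $(P_0*Q)^s(\textbf{b})=0$ and the only candidate zero of $P_0$ on the orbit is $\textbf{a}_0$, Proposition \ref{prodstar} forces $Q$ to vanish at $\textbf{a}_0$ (or on a uniquely determined rotated point that must coincide with $\textbf{a}_0$ by repeating the argument with $Q*P_0\in I$). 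Making this forcing precise, so that different isolated zeros of different generators collapse to a single common point, is where I expect the delicate step to lie.
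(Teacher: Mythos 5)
Note first that this theorem is stated in the present paper only as a citation to \cite[Theorem 3.6]{irred}, so there is no in-document proof to compare against; I am assessing your proposal on its own terms.

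Your forward inclusion is correct and complete: $P^s$ has real coefficients, hence is equivariant under simultaneous conjugation on commuting points, and Proposition~\ref{prodstar} (first branch) lets you pass from $P(\aaa)=0$ to $P^s(\aaa)=0$ and then to $Q(\textbf{b})=0$ for any $Q=\sum_i P_i^s*R_i\in\mathcal S(I)$. The reverse inclusion, however, has a genuine gap exactly where you flagged it. The step ``since $(P_0*Q)^s(\textbf{b})=0$ and the only candidate zero of $P_0$ on the orbit is $\aaa_0$, Proposition~\ref{prodstar} forces $Q$ to vanish at $\aaa_0$'' does not follow: $P_0*Q$ vanishes at $\aaa_0$ \emph{automatically}, because $P_0(\aaa_0)=0$ makes the first branch of Proposition~\ref{prodstar} apply regardless of $Q$. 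So the fact that $P_0*Q$ has a zero on $\mathbb S_{\textbf{b}}$ carries no information about $Q$, and the companion test with $Q*P_0$ runs into the symmetric problem. What does work is to exploit the representation formula directly. Parameterize the orbit as $\textbf{b}_K$, $K\in\mathbb S$, so that $P_0(\textbf{b}_K)=\alpha_0+K\beta_0$ and $Q(\textbf{b}_K)=\alpha_Q+K\beta_Q$ for fixed quaternions $\alpha_0,\beta_0,\alpha_Q,\beta_Q$. If $\beta_Q=0$, then $Q^s(\textbf{b})=0$ forces $\alpha_Q=0$ and $Q$ vanishes on the whole orbit, in particular at $\aaa_0$. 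If $\beta_Q\neq 0$, set $c:=\beta_Q^{-1}\beta_0$ and consider $R:=P_0-Q*c\in I$: then $R(\textbf{b}_K)=(\alpha_0-\alpha_Qc)$ is constant on the orbit, yet $R^s(\textbf{b})=0$ forces $R$ to have a zero there, hence $\alpha_0=\alpha_Qc$. Together with $\beta_0=\beta_Qc$ this gives $-\alpha_0\beta_0^{-1}=-\alpha_Q\beta_Q^{-1}$, i.e.\ the unique zeros of $P_0$ and $Q$ on $\mathbb S_{\textbf{b}}$ coincide. It is the closure of $I$ under a well-chosen right scalar multiplication and subtraction, not the closure under $*$-products alone, that supplies the rigidity you were looking for.
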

\noindent As a consequence, we also have that $\mathbb S_{\mathcal V_c(I)}$ is a slice algebraic set.

	\begin{definition}
		We say that a slice algebraic set $\mathcal V_c(I)$ has {\em spherical symmetry} if $\mathcal V_c(I)= { \mathcal V_c(\mathcal S(I))=}\mathbb S_{\mathcal V_c(I)}$.
	\end{definition}
	\noindent ﻿{Let us now recall the definition of reducibility of slice algebraic sets.}
	﻿
\begin{definition} \label{red}
		Given a right ideal $I\subseteq \mathbb{H}[q_1,\ldots,q_n]$, the slice  regular algebraic set $\mathcal{V}(I)$, 
		is said to be {\em reducible} whenever there exist two right ideals $I_1$ and $I_2$ in $\mathbb{H}[q_1,\ldots,q_n]$, such that $\mathcal{V}(I_1)\subsetneq \mathcal{V}(I)$, $\mathcal{V}(I_2)\subsetneq \mathcal{V}(I)$ and
		$$\mathcal{V}(I)=\mathcal{V}(I_1)\cup \mathcal{V}(I_2).$$
		Analogously, $\mathcal{V}_c(I)$ is said to be {\em reducible} if there exist two right ideals $I_1$ and $I_2$ in $\mathbb{H}[q_1,\ldots,q_n]$, such that $\mathcal{V}_c(I_1)\subsetneq \mathcal{V}_c(I)$, $\mathcal{V}_c(I_2)\subsetneq \mathcal{V}_c(I)$
		$$\mathcal{V}_c(I)=\mathcal{V}_c(I_1)\cup \mathcal{V}_c(I_2).$$
		﻿
		\noindent If $\mathcal{V}(I)$ (or $\mathcal{V}_c(I)$) is not reducible, it is called {\em irreducible}.
\end{definition} %
	
As proven in \cite[Proposition 5.7]{irred}, if $\mathcal V_c(I)$ is irreducible, then $\mathcal V(I)$ is irreducible as well, while the contrary does not hold (see Remark 5.8 in \cite{irred}).	
\section{Main result}

\noindent Let us now prove the main result of this note.
\begin{lemma}\label{SVirred} Let $I$ be a quasi prime radical right ideal in $\mathbb H[q_1,\ldots,q_n]$. Then $\mathbb{S}_{\mathcal{V}_c(I)}$ {is irreducible} 
\end{lemma}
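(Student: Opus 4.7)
I will argue by contradiction. Assume $\mathbb{S}_{\mathcal V_c(I)} = \mathcal V_c(\mathcal S(I))$ is reducible, so that $\mathbb{S}_{\mathcal V_c(I)} = V_1 \cup V_2$ with $V_i = \mathcal V_c(J_i)$ for proper, nonzero right ideals $J_1,J_2$ neither of which is contained in the other. The strategy is to build, from each pair $(P,Q) \in J_1 \times J_2$, an element of $I$ of the form $P * Q^s$ (and symmetrically $Q * P^s$), and then to leverage the quasi-prime and radical hypotheses to force a collapse of the decomposition.

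The key vanishing calculation is that $P * Q^s$ vanishes on $V_1 \cup V_2$. Indeed, for $\mathbf a \in V_1$ one has $P(\mathbf a)=0$, so Proposition \ref{prodstar} kills the product. For $\mathbf a \in V_2$, $Q(\mathbf a)=0$ gives $Q^s(\mathbf a)=0$; since $Q^s$ has real coefficients it is both central in $(\mathbb{H}[q_1,\ldots,q_n],*)$ and $\mathbb{H}^{*}$-equivariant under quaternionic conjugation, so if $P(\mathbf a)\neq 0$ then
\[
(P*Q^s)(\mathbf a) = P(\mathbf a)\cdot Q^s\bigl(P(\mathbf a)^{-1}\mathbf a\,P(\mathbf a)\bigr) = P(\mathbf a)\cdot P(\mathbf a)^{-1}Q^s(\mathbf a)P(\mathbf a) = 0.
\]
In particular, $P*Q^s$ vanishes on $\mathcal V_c(I)\subseteq V_1\cup V_2$, and the slice-regular Nullstellensatz from \cite{irred} places $P*Q^s \in \sqrt I = I$.

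Quasi-primeness of $I$ then gives $P\in I$ or $(Q^s)^s=(Q^s)^{*2}\in I$; the centrality of $Q^s$ implies $(Q^s a)^{*2} = (Q^s)^{*2} a^2 \in I$ for every $a\in\mathbb{H}$, so Proposition \ref{arya} combined with $\sqrt I = I$ upgrades $(Q^s)^{*2}\in I$ to $Q^s\in I$. Hence for every $P\in J_1$ and $Q\in J_2$, either $P\in I$ or $Q^s\in I$, and an elementary logical step yields $J_1\subseteq I$ or $\mathcal S(J_2)\subseteq I$. The symmetric argument applied to $Q*P^s$—whose vanishing on $V_1\cup V_2$ uses the equivariance of $P^s$—yields $J_2\subseteq I$ or $\mathcal S(J_1)\subseteq I$. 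Combining the two dichotomies one obtains that at least one of $J_1\subseteq I$, $J_2\subseteq I$, or (simultaneously) $\mathcal S(J_1)\subseteq I$ and $\mathcal S(J_2)\subseteq I$ holds.

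Suppose, after relabeling, $\mathcal S(J_1)\subseteq I$. By Theorem \ref{simmalg} one then has $\mathcal V_c(I)\subseteq \mathcal V_c(\mathcal S(J_1))=\mathbb{S}_{V_1}$; combined with $V_1\subseteq \mathbb{S}_{\mathcal V_c(I)}$ this forces $\mathbb{S}_{V_1}=\mathbb{S}_{\mathcal V_c(I)}$. The remaining task is to turn this equality of spherical closures into an ideal-level contradiction with the hypothesis $J_2\not\subseteq J_1$ (or $J_1\not\subseteq J_2$). I expect this final step to be the main obstacle: the natural route is to invoke the irreducibility of each individual spherical set $\mathbb{S}_{\mathbf a}\subseteq \mathbb{S}_{\mathcal V_c(I)}$ as a slice algebraic set, which in tandem with the covering $\mathbb{S}_{\mathbf a}\subseteq V_1\cup V_2$ forces every sphere to lie entirely in $V_1$ or entirely in $V_2$; a sphere-by-sphere bookkeeping, using the containments $\mathcal S(J_1)\subseteq I$ or $J_1\subseteq I$ together with the Nullstellensatz, should then show that one of the $V_i$ swallows the whole of $\mathbb{S}_{\mathcal V_c(I)}$, contradicting the non-containment hypothesis on the ideals. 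The earlier steps—the vanishing computation for $P*Q^s$ and the centrality-based promotion via Proposition \ref{arya}—are essentially mechanical once one observes that the symmetrizations $P^s, Q^s$ are central in the $*$-ring.
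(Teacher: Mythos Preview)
Your vanishing computation for $P*Q^s$ and the use of Proposition~\ref{arya} to promote $(Q^s)^{*2}\in I$ to $Q^s\in I$ are correct and elegant; they genuinely differ from the paper, which works with $P*Q$ and therefore must first assume that the pieces $V_\ell$ are spherically symmetric in order to make the conjugated point $P(\mathbf a)^{-1}\mathbf a\,P(\mathbf a)$ land back in $\mathcal V_c(I_2)$. Your dichotomies $J_1\subseteq I$ or $\mathcal S(J_2)\subseteq I$ (and the symmetric one) are also correctly derived.

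The gap is exactly where you flag it. From any of your three outcomes you obtain at best an equality of \emph{spherical closures}, say $\mathbb S_{V_1}=\mathbb S_{\mathcal V_c(I)}$, and you need to convert this into $V_2\subseteq V_1$ (or $V_1\subseteq V_2$). That conversion is not available in general: one can have $\mathcal V_c$-sets $V_1,V_2$ with $\mathbb S_{V_1}=\mathbb S_{V_2}$ and yet $V_1\not\subseteq V_2$, $V_2\not\subseteq V_1$ (think of a single arranged sphere met by $V_1$ and $V_2$ in different points). Your ``sphere-by-sphere'' sketch does not close this: even granting that every sphere $\mathbb S_{\mathbf a}\subseteq\mathbb S_{\mathcal V_c(I)}$ is irreducible and hence lies entirely in $V_1$ or in $V_2$, this only reshuffles the spheres and does not force an inclusion between $V_1$ and $V_2$.

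The paper resolves precisely this point by first reducing to the case where both pieces are spherically symmetric. It replaces $V_\ell$ by $\mathbb S_{V_\ell}=\mathcal V_c(\mathcal S(J_\ell))$ and argues that the new decomposition $\mathbb S_{\mathcal V_c(I)}=\mathbb S_{V_1}\cup\mathbb S_{V_2}$ is still nontrivial (neither $\mathbb S_{V_\ell}$ contains the other). Once $V_\ell=\mathbb S_{V_\ell}$, your own dichotomies finish immediately: e.g.\ $\mathcal S(J_2)\subseteq I$ gives $\mathcal V_c(I)\subseteq\mathbb S_{V_2}=V_2$, hence $V_1\subseteq\mathbb S_{\mathcal V_c(I)}\subseteq V_2$, a contradiction. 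So the missing idea in your argument is not the endgame bookkeeping but the preliminary reduction to spherically symmetric pieces; once you supply that (which is the content of the second half of the paper's proof), your $P*Q^s$ route and the paper's $P*Q$ route both close without difficulty.
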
 
\begin{proof} Suppose, by contradiction, that $$\mathbb{S}_{\mathcal{V}_c(I)}=\mathcal{V}_c(I_1)\cup\mathcal{V}_c(I_2)$$ with $\mathcal{V}_c(I_1)$ and $\mathcal{V}_c(I_2)$ strictly contained in $\mathbb S_{\mathcal{V}_c(I)}$. Notice that we can assume without loss of generality that $I_\ell$ is radical for $\ell=1,2$. We thus have  $$\mathbb{S}_{\mathcal{V}_c(I)}=\mathbb{S}_{\mathcal{V}_c(I_1)}\cup\mathbb{S}_{\mathcal{V}_c(I_2)}.$$
Consider first the case in which $\mathcal V_c(I_\ell)=\mathbb S_{\mathcal V_c(I_\ell)}$ for $\ell=1,2$. 

\noindent 

In this case, it is possible to find a polynomial $P$ in $I_1\setminus{I}$. In fact, if $I_1\subseteq I$, then 
$\mathcal{V}_c(I)\subseteq \mathcal{V}_c(I_1)$ and 
$$\mathbb{S}_{\mathcal{V}_c(I)}=\mathbb{S}_{\mathcal{V}_c(I_1)}\cup\mathbb{S}_{\mathcal{V}_c(I_2)} \subseteq \mathbb{S}_{\mathcal{V}_c(I_1)}=\mathcal{V}_c(I_1)$$ 
which is in contradiction with our assumption.\\ 
Let now $P\in I_1\setminus{I}$ and for every $Q\in I_2$ consider $P*Q$. For any ${\textbf {\em a}}\in \mathcal{V}_c(I)\subseteq \mathbb S_{\mathcal V_c(I)}$ we have that ${\textbf {\em a}}$ belongs to $\mathbb{S}_{\mathcal{V}_c(I_1)}$ or to $\mathbb{S}_{\mathcal{V}_c(I_2)}$. 

 \noindent If ${\textbf {\em a}}\in\mathbb{S}_{\mathcal{V}_c (I_1)}=\mathcal{V}_c(I_1)$, since $P\in I_1$, we have that $P({\textbf {\em a}})=0$. 
 
\noindent If ${\textbf {\em a}}\in\mathbb{S}_{\mathcal{V}_c(I_2)}\setminus \mathbb{S}_{\mathcal{V}_c (I_1)}\subseteq {\mathcal{V}_c(I_2)}$, we have that $Q(P^{-1}({\textbf {\em a}}){\textbf {\em a}}P({\textbf {\em a}}))=0$ that is $(P*Q)({\textbf {\em a}})=0,$ hence $P*Q\in \mathcal J (\mathcal V_c(I))= \sqrt{I}=I$. Now recall that $I$ is quasi prime. It then follows that either $P\in I$, which is not the case, or $Q^s\in I$. This holds for every $Q\in I_2$, thus $$\mathcal S(I_2)\subseteq I,\;\;\mbox{and}\;\;\mathcal{V}_c(I)\subseteq \mathcal{V}_c(\mathcal S(I_2))=\mathbb{S}_{\mathcal{V}_c(I_2)}.$$ We thus have 
$$\mathbb{S}_{\mathcal{V}_c(I)}=\mathbb{S}_{\mathcal{V}_c(I_1)}\cup\mathbb{S}_{\mathcal{V}_c(I_2)} \subseteq \mathbb{S}_{\mathcal{V}_c(I_2)}=\mathcal{V}_c(I_2)$$ 
again in contradiction with our assumption.\\

\noindent {Consider now the case in which  $\mathcal V_c(I_1)\neq \mathbb S_{\mathcal V_c(I_1)}$.
 This is possible if and only if we can decompose $\mathcal V_c(I_1)=\mathbb S_{E_1} \cup F_1$ where $\mathbb S_{E_1}$ is the symmetrization of a (non-empty) set of points with commuting components and $F_1$ is a set of points with commuting components such that $\mathbb S_{F_1}\subset {\mathcal V_c(I_2)}$ (since $\mathbb S_{\mathcal V_c(I)}$ has spherical symmetry) and $\mathbb S_{F_1}\cap \mathcal V_c(I_1)=F_1$.  Denote as $V_1$ the set $\mathbb S_{E_1}$. 
%

{ Similarly, one can write 
 $\mathcal V_c(I_2)$   as the union of a (non-empty) symmetrized set $V_2=\mathbb S_{E_2}$ and a (possibly empty) set  $F_2$  such that $\mathbb S_{F_2}\subset \mathbb S_{E_1}\subset{\mathcal V_c(I_1)}$ and $\mathbb S_{F_2}\cap \mathcal V_c(I_2)=F_2$.}

\noindent Note that  $\mathbb{S}_{\mathcal{V}_c(I)}=V_1\cup V_2$.

%
%

{
Furthermore, since both $V_\ell$, for $\ell=1,2$, consist of points with commuting components, 
recalling Formula \eqref{ZcinVc}, we have that
 $V_\ell\subseteq \mathcal{V}_c(\mathcal{J}(V_\ell))$, for $\ell=1,2$.
Moreover, 
since $V_\ell\subseteq  \mathcal V_c(I_\ell),$ then   $\mathcal{J}(V_\ell)\supseteq \mathcal{J} (\mathcal V_c(I_\ell))=\sqrt{I_\ell}=I_\ell$ and hence $\mathcal {V}_c(\mathcal{J}(V_\ell))\subseteq \mathcal V_c(I_\ell)\subsetneq \mathbb S_{\mathcal V_c(I)}.$
}
We  thus have
$$\mathbb{S}_{\mathcal{V}_c(I)}=V_1\cup V_2\subseteq \mathcal{V}_c(\mathcal{J}(V_1))\cup \mathcal{V}_c(\mathcal{J}(V_2))\subseteq  \mathcal V_c(I_1)\cup  \mathcal V_c(I_2)=\mathbb{S}_{\mathcal{V}_c(I)}.$$ Now, from Proposition 3.8 in \cite{irred}  we get that $\mathcal{J}(V_i)$ is a two-sided ideal  and therefore and Proposition 3.10 in \cite{irred} implies that $\mathcal{V}_c(\mathcal{J}(V_\ell))$ has spherical symmetry. Thus $\mathbb{S}_{\mathcal{V}_c(I)}$  admits another decomposition  as the non-trivial union of two algebraic subsets $\mathcal{V}_c(\mathcal{J}(V_\ell))$ with spherical symmetry. This cannot occur, as we have seen in the first part of the proof.}
%
%
\end{proof}

\noindent Lemma \ref{SVirred} allows us to remove the hypothesis on the irreducibility of $\mathbb S_{\mathcal V_c(I)}$ in Theorem 5.18 in \cite{irred}. Combining these results with \cite[Theorem 5.12]{irred} we obtain the following characterization.
\begin{theorem}\label{cpirred}
		Let $I$ be a radical right ideal of $\mathbb H[q_1,\ldots,q_n]$. Then $I$ is quasi prime if and only if $\mathcal V_c(I)$ is irreducible.
	\end{theorem}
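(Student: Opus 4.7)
The plan is to derive Theorem \ref{cpirred} as a direct consequence of Lemma \ref{SVirred} together with two results already established in \cite{irred}. The argument naturally splits into the two implications, and because the technical novelty has been absorbed into the lemma, the proof should reduce to bookkeeping.

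For the forward implication, assuming $I$ is a radical quasi prime right ideal, I would first apply Lemma \ref{SVirred} to conclude that $\mathbb{S}_{\mathcal V_c(I)}$ is irreducible. This is precisely the extra hypothesis under which \cite[Theorem 5.17]{irred} yields the irreducibility of $\mathcal V_c(I)$; invoking that theorem closes the direction. In effect, Lemma \ref{SVirred} is being used only to eliminate the assumption on $\mathbb{S}_{\mathcal V_c(I)}$ that was required in the earlier formulation, as indicated in the sentence preceding the theorem statement.

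For the converse implication, I would appeal directly to \cite[Theorem 5.11]{irred}: under the standing radicality assumption on $I$, irreducibility of $\mathcal V_c(I)$ already forces $I$ to be quasi prime. No intermediate step involving the symmetrization is needed here.

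I do not anticipate any substantive obstacle at this stage. The entire novelty is encapsulated in Lemma \ref{SVirred}, and the remaining work consists only in checking that the hypotheses match across the three ingredients. Since Lemma \ref{SVirred}, \cite[Theorem 5.17]{irred}, and \cite[Theorem 5.11]{irred} are all formulated for radical right ideals of $\mathbb H[q_1,\ldots,q_n]$, the alignment is immediate and the assembly of the proof is mechanical.
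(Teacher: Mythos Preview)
Your proposal is correct and matches the paper's own argument: the theorem is stated as an immediate consequence of combining Lemma \ref{SVirred} with \cite[Theorem 5.17]{irred} for the forward direction and \cite[Theorem 5.11]{irred} for the converse, exactly as you outline. There is nothing to add.
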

	
\noindent A natural question is if,  in the previous statement, the request that the ideal $I$ is radical is redundant. The following example shows that this is not the case.  
\begin{example}\label{ex1}
	Consider the ideal $I=\langle (q+1)^{*2}\rangle=\langle (q+1)^{2}\rangle$ in $\mathbb H[q]$ which is bilateral, since the generator has real coefficients, and quasi prime. In fact
	if $P*Q\in I$ then \[P*Q=(q+1)^2*K\] with $K\in \mathbb H[q]$, which implies that either $(q+1)^{2}$ is a factor of $P$, and hence $P\in I$, or $q+1$ is a factor of $Q$ and thus $(q+1)^2$ is a factor of $Q^s$, and hence $Q^s\in I$. \\
	\noindent However, $I$ is not radical. Indeed, thanks to Proposition \ref{arya}, one obtains that $q+1 \in \sqrt{I}$, since for any $a\in \mathbb H$, $((q+1)a)^{*2}=(q+1)^2a^{2}\in I$, but $q+1 \notin I$. 
	
\end{example}  	
{\noindent 
%
One can also investigate whether the quasi-primeness of an ideal is inherited by its radical. For the moment we do not have counterexamples. However, as a consequence of Lemma \ref{SVirred} and of Theorem \ref{cpirred}, we can prove
\begin{corollary}
Let $I$ be a quasi prime radical right ideal of $\mathbb H[q_1,\ldots,q_n]$. Then $\sqrt{\mathcal S(I)}$ is quasi prime.
\end{corollary}
\begin{proof}
	The result follows from the fact that $\mathcal V_c(\sqrt{\mathcal S(I)})=\mathbb S_{\mathcal V_c(I)}$.
\end{proof}

Note that in general 
the fact that $\sqrt I$ is a quasi prime right ideal does not necessarily imply that $I$ is quasi prime. 
\begin{example}
Let $I=\langle(q+1)^3\rangle$. Then $\sqrt I=\langle q+1 \rangle$ since, as shown in Example \ref{ex1}, it contains the ideal generated by $q+1$ which is maximal. Thus $\sqrt I$ is also quasi prime. However $I$ is not quasi prime. In fact $(q+1)^2*(q+1)\in I$, but neither $(q+1)^2$ nor $(q+1)^s=(q+1)^2$ belong to $I$.
\end{example}
 }

\noindent Moreover, not all radical ideals are quasi prime. 
\begin{example}\cite[Example 5.16]{irred}
The ideal $I=\langle q_1^2+1,q_2^2+1\rangle$ in $\mathbb{H}[q_1,q_2]$, which is radical, is not quasi prime.
Indeed $q_1^2-q_2^2=(q_1-q_2)*(q_1+q_2)\in I$ but neither $(q_1-q_2)$ nor $(q_1+q_2)^s$ belong to $I$.
	\end{example}
\noindent Observe that the fact that in the previous example $I$ is not quasi prime could be also deduced by Theorem \ref{cpirred}, since 
\[\mathcal V_c(I)=(\mathbb S_i\times \mathbb S_i) \cap \bigcup_{J\in \mathbb S} \mathbb C_J^2=\mathbb S_{(i,i)} \cup \mathbb S_{(-i,i)}\]
is reducible. 
The same conclusion was obtained in \cite{irred}, by applying \cite[Theorem 5.13]{irred}. 



\begin{thebibliography}{99}
	\bibitem{aryapoor} M. Aryapoor, ``Explicit Hilbert's Nullstellensatz over the division ring of quaternions'', {\sc J. Algebra}, {\bf 657},   p. 26--36, (2024) 
	﻿
	﻿
	\bibitem{Nul2}  A. Gori--G. Sarfatti--F. Vlacci ``A Strong Version of the Hilbert Nullstellensatz for slice regular polynomials in several quaternionic variables", {\sc J. Algebra}, {\bf 687}, p. 269--291, (2026)
	\bibitem{irred}  A. Gori--G. Sarfatti--F. Vlacci ``On the irreducibility of slice algebraic sets", preprint available at {\tt arXiv:2507.21674} 

	﻿
	\bibitem{reyes} M. Reyes ``A one-sided prime ideal principle for non commutative rings'', {\sc J. Algebra Appl.} {\bf 9}, (6) p. 877--919, (2010)
	﻿
	﻿
\end{thebibliography}
\end{document}